\newtheorem{theorem}{Theorem}[section]
\newtheorem{corollary}[theorem]{Corollary}
\theoremstyle{definition}
\numberwithin{equation}{section}
\newcommand\R {{\mathbb R}}
\newcommand\ie{{\it i.e.}}
\DeclareMathOperator{\area}{{\rm area}} 
\DeclareMathOperator{\vol}{{\rm vol}} 
\DeclareMathOperator{\length}{{\rm length}} 
\DeclareMathOperator{\sys}{{\rm sys}}
\DeclareMathOperator{\diam}{{\rm diam}}
\DeclareMathOperator{\Var}{{\rm Var}}
\newcommand{\RP}{{\mathbb R\mathbb P}}
\long\def\forget#1\forgotten{} %
\numberwithin{equation}{section}
\title {A Pu--Bonnesen inequality}
\author[M. Katz]{Mikhail G. Katz} \address{M.~Katz, Department of
  Mathematics, Bar Ilan University, Ramat Gan 5290002 Israel}
\email{katzmik@math.biu.ac.il}
\author[S.\;Sabourau]{St\'ephane Sabourau}
\address{\parbox{\linewidth}{Univ Paris Est Creteil, CNRS, LAMA, F-94010 Creteil, France \\
Univ Gustave Eiffel, LAMA, F-77447 Marne-la-Vall\'ee, France}}
\email{stephane.sabourau@u-pec.fr}
\subjclass[2020]
{Primary 53C45; Secondary 52A38, 53A05, 52A15, 53C20}
\keywords{Convex surfaces, Pu's inequality, Bonnesen's inequality, circumscribed and inscribed radii.}
\thanks{Partially supported by the ANR project Min-Max (ANR-19-CE40-0014).}
\begin{document}

%\doublespacing
\thispagestyle{empty}

%\huge

\begin{abstract}
We prove an inequality of Bonnesen type for the real projective plane,
generalizing Pu's systolic inequality for positively-curved metrics.
The remainder term in the inequality, analogous to that in Bonnesen's
inequality, is a function of~$R-r$ (suitably normalized), where~$R$
and~$r$ are respectively the circumradius and the inradius of the
Weyl--Lewy Euclidean embedding of the orientable double cover.  We
exploit John ellipsoids of a convex body and Pogorelov's ridigity
theorem.
\end{abstract}

\maketitle

%\today

%\tableofcontents

\section{Pu, Bonnesen, and Weyl--Lewy}
\label{s1}

Pu's inequality asserts that~$\area(g) -\frac2\pi \sys^2(g) \ge0$ for
every Riemannian metric~$g$ on~$\RP^2$ where equality is satisfied if
and only if~$g$ has constant Gaussian curvature (\cite{Pu52}, 1952).
The inequality has recently been strengthened \cite{Ka20} to
\begin{equation}
\label{e11}
\area(g)-\frac{2}{\pi}\sys^2(g)\geq2\pi \Var_\mu(f)
\end{equation}
where the variance $\Var_\mu$ is with respect to the probability
measure~$\mu$ induced by the constant curvature unit area metric~$g_0$
in the conformal class of~$g$, so that~$g=f^2g_0$.  A similar
strengthened version exists for Loewner's torus inequality; see
\cite{Ho09}.  However, the remainder term~$\Var_\mu(f)$ in these
inequalities does not exhibit any explicit relation to the geometry of
the metric.  In this paper we seek a strengthening of Pu's inequality
where the relation to the metric is more explicit.

Bonnesen's inequality and its analogs involve a strengthening of the
isoperimetric inequality of the following type:
\begin{equation}
\label{e12}
L^2 - 4\pi A \geq f(R,r),
\end{equation}
where~$L$ is the length of a Jordan curve in~$\R^2$,~$A$ is the area
of the region bounded by the curve,~$R$ is the circumradius and~$r$ is
the inradius \cite[p.\;3]{BZ}.  In Bonnesen's inequality, one
has~$f(R,r)=\pi^2(R-r)^2$.  Additional inequalities exist with $f$
dependent on parameters $A$ or $L$,
namely~$f(R,r)=A^2\big(\frac1r-\frac1R\big)^2$
and~$f(R,r)=L^2\big(\frac{R-r}{R+r} \big)^2$ (ibid.).

Now suppose~$(\RP^2,g)$ has positive curvature.  Then by the
generalisation of the Weyl--Lewy theorem \cite{We16}, \cite{Le38}, the
orientable double cover~$S_g$ admits a unique (up to congruence)
isometric embedding as a convex surface~$S_g\subseteq\R^3$ 
%
%\cite{sack} (see~\cite{CL71} for an alternative proof).  
%
(Nirenberg~\cite{Ni53}, Lu~\cite{Lu20}).  Here, we seek an analog of
the inequality~\eqref{e11} with a remainder term exhibiting a more
explicit relation to the geometry of~$S_g$.  The relation is expressed
in terms of the difference~$R-r$ between the circumscribed and
inscribed radii of~$S_g$, analogous to~\eqref{e12}.  Let~$R$ and~$r$
be the circumradius and inradius of~$S_g\subseteq\R^3$, respectively.

\begin{theorem} \label{theo:2d}
\label{t11}
There exists a monotone continuous function~$\lambda(t)>0$ for~$t>0$
such that if~$(\RP^2,g)$ has positive Gaussian curvature, then
\begin{equation}
\label{e13}
\frac{\area(g)}{\sys^2(g)} -\frac{2}{\pi} \geq
\lambda\Big(\frac{R-r}{\sys}\Big),
\end{equation}
where~$\lambda(t)$ is asymptotically linear as~$t\to\infty$, and~$R$
and~$r$ are the circumscribed and inscribed radii of the Euclidean
embedding of the orientable double cover~$S_g \subseteq \R^3$.
\end{theorem}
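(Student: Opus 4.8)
The plan is to combine three ingredients: elementary convex geometry based on John ellipsoids and on monotonicity of perimeter and area under inclusion of convex bodies; a Blaschke-type compactness argument for convex surfaces; and the rigidity in Pu's inequality. By scale invariance of both sides of~\eqref{e13} we may assume~$\sys(g)=1$; let~$K\subseteq\R^3$ be the convex body with~$\partial K=S_g$, so~$\area(S_g)=2\area(g)$. The deck involution of~$S_g$ is an intrinsic isometry, so by the uniqueness of the convex embedding it is the restriction of a rigid motion of~$\R^3$; being an involution that fixes the centroid of~$S_g$ but no point of~$S_g$, this rigid motion must be~$-\mathrm{id}$. Hence~$K$ is symmetric about the origin, which is then the common center of its inscribed and circumscribed balls, of radii~$r$ and~$R$.

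Next I would prove the basic estimate. Let~$E\subseteq K$ be the John ellipsoid with semi-axes~$a\le b\le c$; as~$K$ is origin-symmetric, $E\subseteq K\subseteq\sqrt3\,E$. Cutting~$K$ with the plane~$\Pi$ through the origin spanned by the~$a$- and~$b$-axes of~$E$ yields a centrally symmetric convex curve~$S_g\cap\Pi\subseteq\sqrt3\,E\cap\Pi$, of length at most~$2\pi\sqrt3\,b$ by monotonicity of perimeter; since~$-\mathrm{id}$ acts freely on it, it descends to a noncontractible loop of~$\RP^2$, whence~$\sys(g)\le\pi\sqrt3\,b$. Orthogonal projection of~$S_g$ onto the plane of the two longer axes of~$E$ is almost everywhere at least two-to-one onto a convex region that contains the ellipse with semi-axes~$b$ and~$c$, so~$\area(S_g)\ge2\pi b c$. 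Finally~$R\le\sqrt3\,c$ because~$K\subseteq\sqrt3\,E$. Combining these with~$\sys(g)=1$,
\[
\area(g)=\tfrac12\area(S_g)\ge\pi b c\ge\pi\cdot\frac{1}{\pi\sqrt3}\cdot\frac{R}{\sqrt3}=\frac{R}{3}\ge\frac{R-r}{3},
\]
so~$\frac{\area(g)}{\sys^2(g)}-\frac{2}{\pi}\ge\frac13\cdot\frac{R-r}{\sys}-\frac{2}{\pi}$. This already gives~\eqref{e13} with~$\lambda(t)=\tfrac16 t$ on the range~$t\ge12/\pi$, and is the source of the asymptotic linearity of~$\lambda$ (a matching upper bound on the optimal~$\lambda$ in this range being provided by thin prolate spheroids).

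It remains to bound~$F(g):=\frac{\area(g)}{\sys^2(g)}-\frac{2}{\pi}$ below by a positive constant when~$t_0\le\frac{R-r}{\sys}\le12/\pi$, for each fixed~$t_0\in(0,12/\pi)$; the function~$\lambda$ of the theorem is then obtained from the nondecreasing infimum~$\lambda_*(t):=\inf\{F(g):\tfrac{R-r}{\sys}\ge t\}$ by passing to a continuous monotone minorant (for instance a running average of a shifted copy of~$\lambda_*$) that stays positive on~$(0,\infty)$ and asymptotically linear, after which~$F(g)\ge\lambda_*(\tfrac{R-r}{\sys})\ge\lambda(\tfrac{R-r}{\sys})$. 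To prove the positivity, suppose for contradiction that there exist positively curved~$g_n$ with~$\sys(g_n)=1$, $R_n-r_n\ge t_0$ and~$\area(g_n)\to\frac{2}{\pi}$. The estimate above gives~$R_n\le3\area(g_n)$, so all the~$K_n$ lie in a fixed ball; by the Blaschke selection theorem~$K_n\to K_\infty$ (along a subsequence) in the Hausdorff metric, with~$K_\infty$ origin-symmetric and, since~$\sys(g_n)\le\pi\sqrt3\,b_n$ keeps the middle semi-axis of~$E_n$ bounded below, at least two-dimensional. As surface areas and intrinsic metrics of convex bodies depend continuously on the body in the Hausdorff metric (continuous dependence in the Weyl problem, underpinned by Pogorelov's rigidity), $g_n$ converges to an Alexandrov metric~$g_\infty$ of nonnegative curvature on~$\RP^2$ with~$\sys(g_\infty)=1$ and~$\area(g_\infty)=\frac{2}{\pi}$, so equality holds in Pu's inequality for~$g_\infty$. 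But~$K_\infty$ is not a round ball, since its circumradius strictly exceeds its inradius ($R_\infty-r_\infty\ge t_0$ in the nondegenerate case, and~$r_\infty=0<R_\infty$ in the degenerate one); hence~$g_\infty$ is not of constant positive curvature, contradicting the rigidity in Pu's inequality.

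The step I expect to be the main obstacle is this last one: justifying the compactness and continuity statements for the family of convex surfaces, in particular handling the possibly degenerate two-dimensional limit, and invoking Pu's inequality together with its equality case in the generality of Alexandrov metrics. For the latter point, an alternative is to pass to the limit in the strengthened inequality~\eqref{e11}, whose remainder~$2\pi\Var_\mu(f)$ stays bounded away from zero along the sequence because the limiting conformal factor is nonconstant, already contradicting~$\area(g_n)-\frac{2}{\pi}\to0$.
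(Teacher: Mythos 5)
Your proposal is correct and follows the same overall strategy as the paper's first (extrinsic) proof: John-ellipsoid estimates give the linear behaviour for large~$(R-r)/\sys$, Blaschke compactness handles the intermediate range, and positivity comes from the equality case of Pu's inequality for singular limit metrics combined with Pogorelov's rigidity. The differences are in execution, and several are genuine improvements: you justify the symmetric John constant~$\sqrt3$ by showing the deck involution extends to~$-\mathrm{id}$ (the paper uses~$E\subseteq S_g\subseteq\sqrt3\,E$ without comment), and you make the large-$t$ regime fully effective ($\area(g)\ge\frac{R-r}{3}$ when~$\sys=1$, hence~$\lambda(t)=t/6$ for~$t\ge 12/\pi$), whereas the paper only records comparabilities up to uniform constants and remarks that they can easily be made effective. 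The main structural difference is the treatment of thin bodies: the paper's Step~2 excludes degenerate limits beforehand via the planar inequality~$W^2\le\frac{4}{\pi}A$ for centrally symmetric convex domains, so its compact family has uniformly bounded eccentricity; you instead allow a flat limit (a doubly covered convex planar domain) and dispose of it inside the rigidity step by noting~$r_\infty=0<R_\infty$. That works, but it means your application of the singular Pu rigidity (Corollary~\ref{coro:rigidity}) and of the convergence of intrinsic metrics must cover doubled planar domains; this is legitimate --- the curvature point masses of such a limit are still~$<2\pi$, so the conformal representation~$g=e^u g_0$ of Section~\ref{sec:rigidity} applies --- but it is precisely the point you flag as the main obstacle, and it is where the paper's Step~2 buys a simplification (your fallback via the variance remainder in~\eqref{e11} would need a separate justification of convergence of the conformal factors, so the route through Corollary~\ref{coro:rigidity} is the cleaner one). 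Two small corrections: the uniform convergence of intrinsic distances under Hausdorff convergence is Alexandrov's convergence theorem (see \cite[\S III.1]{Alex} or \cite[Lemma~10.2.7]{BBI}), not a consequence of continuous dependence in the Weyl problem; and in your final step the passage from ``$K_\infty$ is not a round ball'' to ``$g_\infty$ is not of constant curvature'' is itself Pogorelov's rigidity theorem, so it should be invoked explicitly there, as the paper does in its Step~4.
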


Our argument produces asymptotically linear bounds for the remainer
term for large~$R-r$ (when the systole is normalized) that can easily
be made effective.  It would be interesting to develop effective lower
bounds for the remainder term for small~$R-r$ as in Volkov
\cite{Vo68}.

Generalisations of Pu's inequality are studied in \cite{Ba98},
\cite{Gu10}, \cite{Iv04}, \cite{Ka06a}, \cite{Ka06b}, and elsewhere.

\section
{Rigidity in Pu's inequality for singular metrics}
\label{sec:rigidity}

In this section, we review some aspects of Reshetnyak's work on
Alexandrov surfaces (\ie, singular surfaces with bounded integral
curvature) focusing on nonnegatively curved metrics.

We refer to \cite{res93}, \cite{res01}, \cite{tro}, \cite{deb} and
\cite{fil} for a more detailed exposition.

Let~$(S^2,g_n)$ be a sequence of nonnegatively curved Riemannian
metrics on~$S^2$ such that the curvature measures~$\omega_n=K_{g_n} \,
dg_n$ weakly converge to~$\omega$.  The measure difference $\mu=\omega
- dg_0$ is a Radon measure of zero total mass on~$S^2$.  Suppose
that~$\mu(\{x\}) < 2 \pi$ for every~$x \in S^2$.  Then~$d_{g_n} \to d$
for the uniform topology; see~\cite[Theorem~6.2]{tro} or~\cite{res60}.
Here~$d$ is the intrinsic distance induced by~$g=e^u g_0$, where~$u$
is the potential of~$\mu$ with respect to the canonical metric~$g_0$
on~$S^2$ (\ie,~$\mu = \Delta_{g_0} u$ in the distribution sense).
Note that $u$ is expressed via the Green function $G:S^2 \times S^2
\to \R \cup \{ + \infty \}$ through the formula
\begin{equation} \label{eq:Green}
u(x) = \int_{S^2} G(x,y) \, d\mu(y).
\end{equation}
For general Alexandrov surfaces, the potential~$u$ is the difference
of two subharmonic functions, and one has~$u \in \cap_{p<2}
W^{1,p}(S^2)$; see \cite{tro} and references therein (this won't be
needed though).  Furthermore,~$\area(g_n) \to \area(g)$ and
\[
\area(g) = \int_{S^2} e^u \, dg_0.
\]

This result readily applies to a sequence of smooth convex spheres
$S_n \subseteq \R^3$ converging to a convex sphere~$S$ (bounding a
convex body) for the Hausdorff topology in~$\R^3$.  Equivalently, it
applies to a sequence of nonnegatively curved metrics~$g_n$ on~$S^2$
Gromov-Hausdorff converging (without collapse) to a metric on~$S^2$
with nonnegative curvature in Alexandrov's sense.  Indeed, in this
case, the distance on~$S_n$ uniformly converges to the distance
on~$S$; see~\cite[\S III.1]{Alex} or~\cite[Lemma~10.2.7]{BBI}.
Furthermore, the condition~$\mu_S(\{x\}) < 2\pi$, where~$x \in S$, is
always satisfied on the boundary~$S$ of a convex body in~$\R^3$;
see~\cite[\S V.3]{Alex}.  Thus, the metric on~$S$ can be written
as~$g=e^u g_0$ for some weakly regular function~$u$ on~$S$ as above.

For such singular metrics, the characterization of the equality case
in Pu's inequality still holds.  More generally, we have

\forget
\begin{theorem}
\label{theo:rigidity}
Let~$\RP^2$ be the projective plane with a singular Riemannian
metric~$g=e^u g_0$, where~$u$ is the difference of two subharmonic
functions.  Then
\[
\area(\RP^2,g) \geq \frac{2}{\pi} \sys(\RP^2,g)^2
\]
with equality if and only if~$g$ is a metric of constant Gaussian
curvature.
\end{theorem}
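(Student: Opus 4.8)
The plan is to reduce the singular case to the smooth case by an approximation argument, then transfer the known smooth rigidity result across the limit. First I would take a singular metric $g = e^u g_0$ on $\RP^2$ with $u$ a difference of two subharmonic functions, lift it to the orientable double cover $S^2$ (where it remains a metric of the same analytic type, invariant under the deck involution $\tau$), and approximate it by a sequence of smooth $\tau$-invariant metrics $g_n = e^{u_n} g_0$ with nonnegative curvature (or at least with curvature measures $\omega_n$ weakly converging to $\omega$), so that by the Reshetnyak/Troyanov theory recalled above we have $d_{g_n} \to d_g$ uniformly and $\area(g_n) \to \area(g)$. Descending to $\RP^2$, both the areas and the systoles converge: $\area(\RP^2,g_n) \to \area(\RP^2,g)$ and, because uniform convergence of distances forces convergence of the shortest noncontractible loop lengths, $\sys(\RP^2,g_n) \to \sys(\RP^2,g)$. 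Applying the smooth Pu inequality to each $g_n$ and passing to the limit yields the inequality $\area(\RP^2,g) \ge \tfrac2\pi \sys(\RP^2,g)^2$ in the singular case.

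The more delicate half is the equality case. Suppose $\area(\RP^2,g) = \tfrac2\pi\sys(\RP^2,g)^2$. The cleanest route is to invoke the strengthened Pu inequality \eqref{e11}, which I would first need to establish for singular metrics: $\area(g) - \tfrac2\pi\sys^2(g) \ge 2\pi\,\Var_\mu(f)$ where $g = f^2 g_0$ and $\mu$ is the probability measure of the unit-area constant-curvature representative in the conformal class. The proof of \eqref{e11} in \cite{Ka20} proceeds by integral-geometric averaging over a family of systolic loops (images of great circles under a conformal normalization) together with the Cauchy--Schwarz/Jensen inequality, and each ingredient — the coarea inequality along the loops, Fubini, and the variance bound — survives for $g = e^u g_0$ with $u \in \cap_{p<2} W^{1,p}$ since one only needs $f \in L^2$ and the length functional to be lower semicontinuous, which holds here. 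Granting the singular version of \eqref{e11}, equality in Pu forces $\Var_\mu(f) = 0$, hence $f$ is $\mu$-a.e. constant, hence (as $\mu$ has full support) $f$ is constant, so $g$ is a constant multiple of $g_0$, i.e. a metric of constant Gaussian curvature.

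An alternative to proving the singular \eqref{e11} from scratch is to run the approximation more carefully: arrange the smooth approximants $g_n$ so that equality is \emph{almost} attained, $\area(\RP^2,g_n) - \tfrac2\pi\sys^2(\RP^2,g_n) \to 0$, and then use a quantitative/stability form of smooth Pu rigidity (or the smooth version of \eqref{e11}, which gives $\Var_{\mu_n}(f_n)\to 0$) to conclude that the conformal factors $f_n$ converge to a constant in the appropriate sense; passing to the limit then identifies $g$ with a round metric. I expect the main obstacle to be precisely this rigidity-under-limits step: the bare inequality passes to limits trivially, but extracting that the limit metric is \emph{exactly} constant-curvature requires either the singular refinement \eqref{e11} (whose proof must be checked to go through verbatim for weakly regular conformal factors, paying attention to the integral-geometric averaging over systolic geodesics in the singular setting) or a stability estimate for the smooth inequality that is uniform enough to survive Gromov--Hausdorff convergence. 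I would pursue the first option, since \eqref{e11} is already in the literature and its proof is conformal in nature, so the only real work is verifying the measure-theoretic hypotheses are met by $g = e^u g_0$.
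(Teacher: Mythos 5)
Your overall strategy is workable, but it is worth noting that the paper does not smooth at all: it proves the inequality \emph{directly} in the singular setting by Santal\'o's formula \eqref{eq:santalo} on the sphere (integration over the space of closed geodesics) followed by H\"older's inequality along each geodesic, which simultaneously gives $\area(g)\ge\tfrac{2}{\pi}\sys(g)^2$ for any nonnegative conformal factor in $L^2$ (in fact in $L^n$ on $\RP^n$, in all dimensions) and the statement that equality forces the conformal factor to be constant almost everywhere. Your equality-case route --- extending the integral-geometric proof of \eqref{e11} from \cite{Ka20} to weakly regular conformal factors and using equality in Cauchy--Schwarz --- is essentially the same mechanism, so that half of your plan is close in spirit to the paper's argument (Corollary~\ref{coro:rigidity}). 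By contrast, your approximation detour for the inequality buys nothing and costs real hypotheses: the uniform convergence $d_{g_n}\to d_g$ quoted in Section~\ref{sec:rigidity} requires the curvature measure to have no atoms of mass $\ge 2\pi$ (automatic for convex surfaces, but not for an arbitrary difference of subharmonic functions), and the continuity of the systole under uniform convergence of distances, while true, is asserted rather than proved (one needs lower semicontinuity of length together with stability of noncontractibility). The direct Santal\'o--H\"older argument avoids all of this.

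The genuine gap is in your last step. From $\Var_\mu(f)=0$ (your $\mu$ being the round probability measure) you get only that $f$ is constant \emph{almost everywhere}; the parenthetical ``as $\mu$ has full support'' does not bridge the remaining distance, because an $L^2$ conformal factor equal to a constant a.e.\ is not yet the assertion that $g$ \emph{is} a metric of constant Gaussian curvature --- one must rule out a singular representative of the same a.e.\ class whose induced distance differs. The paper closes this by a potential-theoretic normalization: writing $f=e^u$ with $u$ the Green potential \eqref{eq:Green} of the curvature-type measure, constancy of $u$ a.e.\ forces $\mu=\Delta_{g_0}u=0$ in the distributional sense, and then \eqref{eq:Green} gives $u$ constant \emph{everywhere}. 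In your formulation ($u$ a difference of subharmonic functions) the same upgrade can be made, since subharmonic functions are determined by their a.e.\ values, but it has to be stated and used explicitly; without it the equality case is not finished. With that addition your proposal is correct, though less economical than the paper's one-step integral-geometric proof.
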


\begin{proof}
The proof given in~\cite{Ka20} applies in our setting; the bounds
established in~\cite{Ka20} for continuous functions hold true for
integrable functions.  The equality case in the Cauchy-Schwarz
inequality implies that the conformal factor of~$g$ is constant.
\end{proof}
\forgotten

\begin{theorem}
%\label{theo:rigidity}
Let~$\RP^n$ be the projective $n$-space with a singular Riemannian
metric~$g=\bar{f}^2 g_0$ with nonzero systole, where $g_0$ is the standard round metric and~$\bar{f} \in L^n$ is a nonnegative function.  
Then
\[
\frac{\vol(\RP^n,g)}{\sys(\RP^n,g)^n} \geq \frac{\vol(\RP^n,g_0)}{\sys(\RP^n,g_0)^n}.
\]
Furthermore, when $n=2$ and $\bar{f}=e^u$, where $u$ is the potential of a Radon measure~$\mu$ of zero total mass given by~\eqref{eq:Green}, equality holds if and only if~$g$ is a metric of constant Gaussian curvature.
\end{theorem}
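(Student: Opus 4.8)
The plan is to carry out the classical integral-geometric (Crofton) averaging behind Pu's inequality, now in every dimension and for merely conformal metrics, and then to read off the equality case in dimension two from the two inequalities that are used. Write $g=\bar f^2 g_0$ and let $\mathcal L$ be the space of projective lines of $(\RP^n,g_0)$ (equivalently the Grassmannian of $2$-planes in $\R^{n+1}$), with its $SO(n+1)$-invariant measure $d\mathcal L$ of total mass $|\mathcal L|$. The first ingredient is the Crofton formula
\[
\int_{\mathcal L}\Big(\int_\ell \bar f\,ds_{g_0}\Big)\,d\mathcal L(\ell)=c_n\int_{\RP^n}\bar f\,dg_0,
\]
valid for $\bar f\in L^1$ by monotone approximation from the continuous case; the constant $c_n$ is pinned down by taking $\bar f\equiv1$, where every projective line has $g_0$-length $\sys(g_0)$, giving $c_n=\sys(g_0)\,|\mathcal L|/\vol(g_0)$. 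In particular $\int_\ell\bar f\,ds_{g_0}<\infty$ for $d\mathcal L$-almost every $\ell$, and for such $\ell$ this number is the length $\length_g(\ell)$ of $\ell$ in the (possibly singular) metric $g$.

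The second ingredient is that a projective line represents the nontrivial class of $\pi_1(\RP^n)=\Z/2\Z$, so $\length_g(\ell)\geq\sys(g)$ whenever the left side is finite, hence for $d\mathcal L$-almost every $\ell$. Integrating over $\mathcal L$ and inserting the Crofton identity gives $\sys(g)\,|\mathcal L|\leq c_n\int_{\RP^n}\bar f\,dg_0$, that is,
\[
\sys(g)\,\vol(g_0)\leq\sys(g_0)\int_{\RP^n}\bar f\,dg_0.
\]
The third ingredient is H\"older's inequality with exponents $n$ and $\tfrac{n}{n-1}$, giving $\int_{\RP^n}\bar f\,dg_0\leq\big(\int_{\RP^n}\bar f^n\,dg_0\big)^{1/n}\vol(g_0)^{(n-1)/n}=\vol(g)^{1/n}\vol(g_0)^{(n-1)/n}$. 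Substituting this into the previous display and simplifying yields $\sys(g)^n\,\vol(g_0)\leq\sys(g_0)^n\,\vol(g)$, which is the asserted inequality; the hypotheses $\sys(g)>0$ and $\bar f\in L^n$ preclude any degeneracy.

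For the equality case when $n=2$ and $\bar f=e^u$: if equality holds then, in particular, the Cauchy--Schwarz step above is an equality, so $\bar f=e^u$ is constant $g_0$-almost everywhere; as $u$ is the potential of a Radon measure, this forces $u$ to be constant (its Laplacian, hence that measure, being zero), so $g$ is a constant multiple of $g_0$ and has constant Gaussian curvature. Conversely, a metric of constant Gaussian curvature lying in the conformal class $[g_0]$ on $\RP^2$ is, by uniformization, isometric to a round metric $\lambda^2 g_0$ via a diffeomorphism that is then a conformal, hence $g_0$-isometric, automorphism of $\RP^2$ (the conformal group of $\RP^2$ being $SO(3)$, not the larger M\"obius group of $S^2$); so the metric equals $\lambda^2 g_0$, for which every step above is an equality by direct computation. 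I expect the point requiring the most care to be the low regularity: one needs the Alexandrov-geometry facts recalled in Section~\ref{sec:rigidity} in order to know that $\gamma\mapsto\int_\gamma e^u\,ds_{g_0}$ genuinely computes the intrinsic $g$-length and that the Crofton formula survives passage to an $L^1$ conformal factor, while the equality discussion rests in addition on the conformal rigidity of $\RP^2$, which is exactly what prevents a M\"obius deformation of $g_0$ from being expressible as $e^u g_0$ with $u$ a potential.
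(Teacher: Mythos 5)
Your argument is correct and follows essentially the same route as the paper: an integral-geometric averaging over the closed geodesics (your Crofton formula on $\RP^n$ is the paper's Santal\'o formula on the double cover $S^n$), a H\"older inequality against the constant function (applied globally on $\RP^n$ rather than fiberwise along each geodesic, which yields the same sharp constant), the systolic lower bound on the $g$-length of each projective line, and the same rigidity step in which equality in H\"older forces $\bar f$ constant almost everywhere, hence $\mu=\Delta_{g_0}u=0$ and, via the Green representation \eqref{eq:Green}, $u$ constant everywhere. Your additional remarks on the converse direction (constant curvature implies equality, via conformal rigidity of $\RP^2$) and on the low-regularity interpretation of length and systole are sound and consistent with the paper's implicit conventions.
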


\begin{proof}
Denote by~$f$ the lift of~$\bar{f}$ to the double orientable cover~$S^n$ of~$\RP^n$.
Let us recall a simple version of Santal\'o's formula on the standard sphere~$S^n$; see~\cite[IV.19.4]{San}.
The space~$\Gamma$ of closed oriented geodesics on~$S^n$ is a $(2n-2)$-dimensional manifold admitting a natural symplectic structure whose corresponding natural volume measure is denoted by~$\nu$.
Every integrable function $F:S^n \to \R$ satisfies
\begin{equation} \label{eq:santalo}
\int_{S^n} F \, dg_{S^n} = \frac{1}{\vol(S^{n-1},g_0)} \, \int_{\gamma \in \Gamma} \int_{S^1} F(\gamma(t)) \, dt \, d\nu(\gamma).
\end{equation}
Taking $F \equiv 1$, we observe that $\nu(\Gamma) = \frac{1}{2\pi} \vol(S^{n-1},g_0) \vol(S^n,g_0)$.

Applied to~$F=f^n$, Santal\'o's formula yields
\[
\vol(S^n,g) = \int_{S^n} f^n \, dg_0 = \frac{1}{\vol(S^{n-1},g_0)} \, \int_{\gamma \in \Gamma} \int_{S^1} f(\gamma(t))^n \, dt \, d\nu(\gamma).
\]
By H\"older's inequality 
\[
(2\pi)^{\frac{n-1}{n}} \, \left( \int_{S^1} f(\gamma(t))^n \, dt \right)^{\frac{1}{n}} \geq \int_{S^1} f(\gamma(t)) \, dt = \length_g(\gamma),
\]
we deduce
\begin{align*}
\vol(S^n,g) & \geq \frac{1}{(2\pi)^{n-1} \vol(S^{n-1},g_0)} \left( \int_{\gamma \in \Gamma} \length_g(\gamma) \, d\nu(\gamma) \right)^n \\
 & \geq \frac{2^n \nu(\Gamma)}{(2\pi)^{n-1} \vol(S^{n-1},g_0)} \, \sys(\RP^n,g)^n
\end{align*}
since $\length_g(\gamma) \geq 2 \, \sys(\RP^n,g)$ for every $\gamma \in \Gamma$.
Combined with the value of~$\nu(\Gamma)$, we derive
\[
\vol(\RP^n,g) \geq \frac{\frac{1}{2} \vol(S^n,g_0)}{\pi^n} \, \sys(\RP^n,g)^n
\]
as desired.
Furthermore, equality holds if and only if equality holds in H\"older's inequality, that is, if $f$ is constant almost everywhere.

Suppose that $n=2$ and $f=e^u$, where~$u$ is given by~\eqref{eq:Green}.
Since~$f$, and so~$u$, are constant almost everywhere, we have $\mu=\Delta_{g_0} u=0$.
By~\eqref{eq:Green}, this implies that $u$, and so~$f$, are constant everywhere.
Hence the extremal metric has constant curvature.
\end{proof}

Specifically, we have

\begin{corollary}
\label{coro:rigidity}
Let~$\RP^2$ be the projective plane with a singular Riemannian
metric~$g=e^u g_0$, where $u$ is the potential of a Radon measure~$\mu$ of zero total mass given by~\eqref{eq:Green}.
%is the difference of two subharmonic functions.  
Then
\[
\area(\RP^2,g) \geq \frac{2}{\pi} \sys(\RP^2,g)^2
\]
with equality if and only if~$g$ is a metric of constant Gaussian
curvature.
\end{corollary}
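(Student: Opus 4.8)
The plan is to obtain the Corollary as the special case $n=2$ of the Theorem proved above. Writing $g=e^{u}g_{0}$ in the form $\bar f^{2}g_{0}$ used in the Theorem, we have $\bar f=e^{u/2}=e^{v}$ where $v=\tfrac{1}{2}u$ is, by~\eqref{eq:Green}, the potential of the zero-mass Radon measure $\tfrac{1}{2}\mu$; thus $\bar f$ has precisely the structure required in the second part of the Theorem. Moreover the integrability hypothesis $\bar f\in L^{2}$ is equivalent to $\int_{\RP^{2}}e^{u}\,dg_{0}<\infty$, that is, to $\area(\RP^{2},g)<\infty$.

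In the situation the Corollary is meant for --- where $g$ is the intrinsic metric on the boundary of a convex body in $\R^{3}$ (or a Gromov--Hausdorff limit of such), so that $u$ is as described in Section~\ref{sec:rigidity} --- one has $0<\area(\RP^{2},g)<\infty$ and $\sys(\RP^{2},g)>0$; here the finiteness of the area holds because every atom of the curvature measure has mass $<2\pi$, whence near such an atom the potential $u$ has a logarithmic singularity with coefficient $<1$ and $e^{u}$ has integrable, mild power-type singularities against $g_{0}$. Granting this, $\bar f\in L^{2}$, so the hypotheses of the Theorem are satisfied with $n=2$, and the Theorem delivers both the inequality $\area(\RP^{2},g)\ge\tfrac{2}{\pi}\sys(\RP^{2},g)^{2}$ and the characterization that equality forces $g$ to have constant Gaussian curvature. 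Conversely, the constant-curvature metric realizes equality, as in Pu's inequality.

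The Corollary being in essence a specialization, I do not anticipate a real obstacle; the one point meriting attention is the integrability check $\bar f\in L^{2}$, which is why I have routed it through the finiteness of $\area(\RP^{2},g)$ together with the bound $<2\pi$ on the atoms of the curvature measure recalled in Section~\ref{sec:rigidity}. (If one prefers, the case $\area(\RP^{2},g)=+\infty$ can be disposed of separately, since the inequality is then trivial and equality cannot hold: a metric of constant Gaussian curvature on $\RP^{2}$ has finite area.)
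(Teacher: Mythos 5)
Your proposal is correct and matches the paper's intent exactly: the Corollary is stated there as an immediate specialization of the preceding Theorem (with $n=2$), with no separate argument given. Your extra care about the factor-of-two renormalization ($\bar f=e^{u/2}$ being the exponential of the potential of $\tfrac12\mu$) and the $L^2$ integrability of $\bar f$ only makes the specialization more explicit than the paper's.
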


\section{Pu's inequality with an~$R-r$ remainder}
\label{s2}

In this section, we present two proofs of Theorem~\ref{theo:2d}.  The
first one follows an extrinsic geometry approach, while the second
relies on intrinsic geometry arguments.

\begin{proof}[First proof of Theorem~\ref{theo:2d}]
We will exploit John ellipsoids as well as Pogorelov's rigidity
theorem.

\medskip
\textbf{Step 1.}  Consider a pair of John ellipsoids~$E\subseteq S_g
\subseteq \sqrt{3} \, E$ as in \cite{Jo48}.  Let~$a\le b \le c$ be the
principal axes of~$E \subseteq \R^3$.  Then up to a uniform
multiplicative constant, each nonnegative curved metric~$g$ on~$\RP^2$
has the following properties:
\begin{enumerate}
\item
\label{i1}
$\sys(g)\sim b$;
\item
the inradius
$r$ of~$S_g$ satisfies~$r\sim a$;
\item
\label{i3}
the circumradius~$R$ of~$S_g$ satisfies~$R\sim c$;
\item
\label{i4}
$\area(g)\sim bc$.
\end{enumerate}
Here, we write~$\alpha \sim \beta$ if there exist two positive
constants~$C$ and~$C'$ (which do not depend on~$g$) such that~$C
\alpha \leq \beta \leq C' \alpha$.

This follows from the existence of distance-decreasing nearest-point
projections from~$\sqrt{3} \, E$ to~$S_g$ and from~$S_g$ to~$E$, due to the
convexity of~$S_g$.

Since the relation~\eqref{e13} is scale-invariant, we can introduce a
normalisation~$b=1$.  Then the systole is uniformly bounded above and
below by item \eqref{i1}.  For a sequence of metrics
with~$c\to\infty$, we have
\mbox{$\area(g)\sim c$} and therefore the inequality~\eqref{e13}
follows from items~\eqref{i3} and~\eqref{i4}, including the linear
asymptotic behavior of the remainder term as~$t\to\infty$.

\medskip
\textbf{Step 2.} By the characterization of the equality case in Pu's
inequality (see Corollary~\ref{coro:rigidity}), it remains to show that
if for a sequence of metrics (with~$b=1$) the
difference\,~$\area-\,\frac{2}{\pi}\sys^2$\, tends to~$0$ then~$S_g$
converges to a round metric.  For such a sequence, the major axis~$c$
is uniformly bounded in view of estimates~\eqref{i1} and~\eqref{i4}.
We need to show that the minor axis~$a$ stays away from zero in such a
sequence of metrics.  Suppose~$a\to0$.  Then~$\sys(g)$ tends to the
width~$W$ of the Jordan curve obtained as the boundary of the
orthogonal projection of~$S_g$ to the plane spanned by the principal
axes~$b$ and~$c$.  Similarly,~$\area(g)$ tends to the area~$A$ of the
planar region~$D \subseteq \R^2$ bounded by the curve.  Since~$D$ is a
centrally symmetric convex planar domain, we have~$W^2\leq \frac4\pi
A$.  Therefore, in the limit, we obtain
\[
\area -\, \frac2\pi\sys^2 = A- \frac2\pi W^2 \geq
A\Big(1-\frac{8}{\pi^2}\Big)>0
\]
which contradicts the assumption that~$\area-\,\frac{2}{\pi}\sys^2$
tends to~$0$.  Hence, the minor axis~$a$ stays uniformly away
from~$0$.

Thus, for a family of metrics with the
difference~$\area-\,\frac{2}{\pi}\sys^2$ tending to~$0$, the
corresponding John ellipsoids have uniformly bounded eccentricity.

\medskip
\textbf{Step 3.}  Convex sets of uniformly bounded eccentricity form a
compact family by the Blaschke selection theorem.  Let~$\lambda(t)$ be
the minimum of~$\frac{\area}{\sys^2}-\frac2\pi$ among nonnegatively
curved metrics with~$\frac{R-r}{\sys}\geq t$.  Every metric which is
not internally isometric to the metric of constant curvature,
satisfies~$R-r>0$.

\medskip
\textbf{Step 4.} Now suppose the metric of the convex surface is
internally isometric to the metric of constant curvature.  It follows
that the surface is congruent to the standard one by Pogorelov's
rigidity theorem~\cite[p.\;167]{Po73} (see Prosanov \cite{Pr20} for a
discussion of the status of the various proofs of this ridigity
result), and therefore $R-r=0$.  Thus~$\lambda(t)>0$ for~$t>0$.
\end{proof}

Our second proof of Theorem~\ref{theo:2d} relies on a more intrinsic
argument.

\begin{proof}[Second proof of Theorem~\ref{theo:2d}]
Since the relation~\eqref{e13} is scale-invariant, we can assume
that~$\sys(g)=1$.

Consider the lift~$\gamma$ on~$S_g \subseteq \R^3$ of a systolic loop of~$(\RP^2,g)$.
Note that~$\gamma$ is a simple closed geodesic of~$S_g$ of length~$L=2 \sys(g)$.
Furthermore, $2\pi r \leq L$.
Thus, $r \leq \frac{1}{\pi}$.
Let~$p$ be a point of~$S_g$ at maximal intrinsic distance from~$\gamma$.
%
%By Berger's lemma~???, there exist two or three arcs of length~$D=d_{S_g}(p,\gamma)$ starting at~$p$ and ending perpendicularly at~$\gamma$.
%These two or three arcs along~$\gamma$ decompose the hemisphere of~$S_g$ bounded by~$\gamma$ into two or three isosceles triangles~$\Delta_i$.
%
There exist at least two arcs of length~$D=d_{S_g}(p,\gamma)$ starting at~$p$ and ending perpendicularly at~$\gamma$.
These two arcs along with~$\gamma$ decompose the hemisphere of~$S_g$ bounded by~$\gamma$ into two isosceles triangles~$\Delta_i$.
Now, consider the comparison triangles~$T_i$ with the same side lengths in the Euclidean plane.
By Toponogov's theorem, the area of~$\Delta_i$ is greater or equal to the area of~$T_i$.
Also, one of the edges of one of the triangles~$\Delta_i$ (and so of~$T_i$) is of length at least~$\frac{L}{2}$, and its other two edges are of length~$D$.
If~$D$ goes to infinity (while the systole of~$g$ is fixed), the area of~$T_i$ goes to infinity and so does the area of~$\Delta_i$ (and~$\RP^2$).
More precisely, the area of~$g$ grows roughly as~$D$.
Note that $2R \leq \diam(g) \leq 2D + \frac{L}{2}$.
This implies that the remainder term~$\lambda$ is asymptotically linear in~\eqref{e13}.

Let~$\lambda(t)$ be the infimum of the
difference~$\area-\,\frac{2}{\pi}\sys^2$ among nonnegatively curved
metric with~$\frac{R-r}{\sys}\geq t$.  From the previous discussion,
we can assume that the diameter of~$g$ is bounded when
considering~$\lambda(t)$.  Since the space~$\mathfrak{M}(n,D,v)$
of~$n$-dimensional compact Alexandrov spaces of diameter at most~$D$
and of volume at least~$v >0$ is compact for the Gromov-Hausdorff
topology (see Theorem~10.7.2 and Corollary~10.10.11 of~\cite{BBI}), it
follows from Perelman's Stability Theorem
(see~\cite[Theorem~10.10.5]{BBI}) that this infimum is attained by an
extremal metric with nonnegative curvature in Alexandrov's sense.
Such a (singular) metric can be written as~$g=e^u g_0$, where~$g_0$ is
the canonical metric on~$\RP^2$ and~$u$ is the difference of two
subharmonic functions; see Section~\ref{sec:rigidity}.  Suppose
that~$\lambda(t)=0$.  By the characterization of the equality case in
Pu's inequality (see Corollary~\ref{coro:rigidity}), the extremal
metric~$g$ is internally isometric to a metric of constant Gaussian
curvature.  By Pogorelov's rigidity theorem, such a convex surface is
congruent to the standard round sphere, so that~$R-r=0$.
Therefore,~$\lambda(t)>0$ for~$t>0$.
\end{proof}

\bibliographystyle{amsalpha}

\begin{thebibliography}{AI}

\bibitem{Alex} Alexandrov, A. D.\, A.\;D.\;Alexandrov selected works.
Part~II.  Intrinsic geometry of convex surfaces.  Edited by
S.~S.~Kutateladze.  Translated from the Russian by
S.~Vakhrameyev. Chapman \& Hall/CRC, 2006.


\bibitem{Ba98} Babenko, I.; Katz, M.\, Systolic freedom of orientable
  manifolds.  \emph{Ann. Sci. \'Ecole Norm. Sup. (4)} \textbf{31}
  (1998), no.\;6, 787--809.


%\bibitem{ber} Berger, M.\, Du c\^ot\'e de chez Pu.
%\emph{Ann. Sci. \'Ecole Norm. Sup. (4)} \textbf{5} (1972), 1--44.


\bibitem{BBI} Burago, D.; Burago, Yu.; Ivanov, S.\, \emph{A course in
metric geometry}.  Graduate Studies in Mathematics,
33. Amer. Math. Soc., 2001.


\bibitem{BZ} Burago, Y.; Zalgaller, V.\, Geometric inequalities.
\emph{Grundlehren der Mathematischen Wissenschaften} \textbf{285}.
Springer Series in Soviet Mathematics.  Springer-Verlag, Berlin, 1988
(original Russian edition: 1980).

%\bibitem{CL71} do Carmo, M.; Lima, E.\, Immersions of manifolds with
%non-negative sectional curvatures.  \emph{Bol. Soc. Brasil. Mat.}
%\textbf{2} (1971), no.~2, 9--22.

%\bibitem{CS} Cossarini, M.; Sabourau, S.\, Minimal area of Finsler disks with minimizing geodesics.
%See arXiv:2010.00701.

\bibitem{deb} Debin, C.\, A compactness theorem for surfaces with
bounded integral curvature.  \emph{J. Inst. Math. Jussieu} \textbf{19}
(2020), no.~2, 597--645.


\bibitem{fil} Fillastre, F.\, An introduction to Reshetnyak's theory
  of subharmonic distances.  See
  \url{https://arxiv.org/abs/2012.10168}



%\bibitem{Gr99} Mikhael Gromov, Metric structures for Riemannian and
%non-Riemannian spaces, Based on the 1981 French original with
%appendices by M. Katz, P. Pansu and S. Semmes, Translated from the
%French by Sean Michael Bates, Progress in Mathematics, 152,
%Birkh\"auser Boston, Boston, MA, 1999.


\bibitem{Gu10} Guth, L.  Systolic inequalities and minimal
  hypersurfaces.  \emph{Geom. Funct. Anal}.  \textbf{19} (2010),
  no.\;6, 1688--1692.


%\bibitem{Ho07} H\"ormander, L.  Notions of convexity.  Reprint of the
%1994 edition.  Modern Birkh\"auser Classics.  Birkh\"auser Boston,
%Boston, MA, 2007.


\bibitem{Ho09} Horowitz, C.; Katz, K.; Katz, M.\, Loewner's torus
inequality with isosystolic defect.  \emph{J. Geom. Anal.}
\textbf{19} (2009), no.~4, 796--808.


\bibitem{Iv04} Ivanov, S.; Katz, M.\, Generalized degree and optimal
  Loewner-type inequalities.  \emph{Israel J. Math}.  \textbf{141}
  (2004), 221--233.


\bibitem{Jo48} John, F.\, Extremum problems with inequalities as
subsidiary conditions.  Studies and Essays Presented to R. Courant on
his 60th Birthday, January 8, 1948, 187--204.  Interscience
Publishers, 1948.

\bibitem{Ka20} Katz, M.; Nowik, T.\, A systolic inequality with
remainder in the real projective plane.  \emph{Open Mathematics}
\textbf{18} (2020), 902--906.  See
\url{https://doi.org/10.1515/math-2020-0050} and
\url{https://arxiv.org/abs/2007.14664}


\bibitem{Ka06a} Katz, M.; Rudyak, Y.\, Lusternik--Schnirelmann
  category and systolic category of low-dimensional manifolds.
  \emph{Comm. Pure Appl. Math}.  \textbf{59} (2006), no.\;10,
  1433--1456.


\bibitem{Ka06b} Katz, M.; Sabourau, S.\, Hyperelliptic surfaces are
  Loewner.  \emph{Proc. Amer. Math. Soc}.  \textbf{134} (2006),
  no.\;4, 1189–-1195.



\bibitem{Le38} Lewy, H.\,On the existence of a closed convex surface
  realizing a given Riemannian metric.  \emph{Proceedings of the
    National Academy of Sciences, U.S.A.} \textbf{24} (1938), no.\;2,
  104--106.



\bibitem{Lu20} Lu, S.\, On Weyl's embedding problem in Riemannian
manifolds.  \emph{Int. Math. Res. Not. IMRN} \textbf{2020} (2020),
no.\;11, 3229--3259.

\bibitem{Ni53} Nirenberg, L.\, The Weyl and Minkowski problems in
differential geometry in the large.  \emph{Comm. Pure Appl. Math.}
\textbf{6} (1953), 337--394.

%\bibitem{Ni74} Nishikawa, S.\, Conformally flat hypersurfaces in a
%Euclidean space.  \emph{Tohoku Math. J. (2)} \textbf{26} (1974),
%563--572.
%
%there seem to be errors here according to Garay.
%


\bibitem{Po73} Pogorelov, A. V.\, \emph{Extrinsic geometry of convex
surfaces.}  Translated from the Russian by Israel Program for
Scientific Translations.  Translations of Mathematical Monographs,
Vol. 35. American Mathematical Society, Providence, R.I., 1973.

\bibitem{Pr20} Prosanov, R.\, Rigidity of compact Fuchsian manifolds
  with convex boundary (2020).  See
  \url{https://arxiv.org/abs/2007.14334}

\bibitem{Pu52} Pu, P. M.\, Some inequalities in certain nonorientable
Riemannian manifolds.  \emph{Pacific J. Math.}  \textbf{2} (1952),
55--71.

\bibitem{res60} Reshetnyak, Yu. G.\, Isothermal coordinates on
manifolds of bounded curvature~I,~II.  \emph{Sibirsk. Mat. J.}
\textbf{1} (1960) 88--116. (In Russian.)

\bibitem{res93} Reshetnyak, Yu. G.\, Two-dimensional manifolds of
bounded curvature. Geometry IV, 3--163, \emph{Encyclopaedia
Math. Sci.}, 70, Springer, 1993.

\bibitem{res01} Reshetnyak, Yu. G.\, On the conformal representation
of Alexandrov surfaces. Papers on analysis, 287--304,
Rep. Univ. Jyv\"askyl\"a Dep. Math. Stat., 83, Univ. Jyv\"askyl\"a,
Jyv\"askyl\"a, 2001.


%\bibitem{RV15} Ritor\'e, M.; Vernadakis, E. Isoperimetric inequalities
%in Euclidean convex bodies.  \emph{Trans. Amer. Math. Soc.}
%\textbf{367} (2015), 4983--5014

%\bibitem{sack} 
%Sacksteder, R.\, On hypersurfaces with no negative sectional curvatures.
%\emph{Amer. J. Math.} \textbf{82} (1960), 609--630. 

\bibitem{San}
Santal\'o, L.\, Integral geometry and geometric probability. Second edition. With a foreword by Mark Kac. Cambridge Mathematical Library. Cambridge University Press, 2004.

\bibitem{tro} Troyanov, M.\, Les surfaces \`a courbure int\'egrale
born\'ee au sens d'Alexandrov.  Journ\'ee annuelle de la Soci\'et\'e
Math\'ematique de France, Montpellier, 2009.  See
\url{https://arxiv.org/abs/0906.3407}

\bibitem{Vo68} Volkov, Ju. [Yu.]  A.\, An estimate of the deformation
  of a convex surface as a function of the change in its intrinsic
  metric. (Russian) \emph{Ukrain. Geometr. Sb.}  \textbf{5--6} (1968),
  44--69.

\bibitem{We16} Weyl, H.  \"Uber die Bestimmung einer geschlossenen
konvexen Fl\"ache durch ihr Linienelement.  Vierteljahrsschrift der
naturforschenden Gesellschaft, Zurich, 61, 1916, pp.\;40--72.



\end{thebibliography}
 
\end{document}